\newmdenv[
  backgroundcolor=yellow!10,
  linecolor=orange,
  linewidth=2pt,
  topline=false,
  bottomline=false,
  rightline=false,
  leftline=true,
  skipabove=10pt,
  skipbelow=10pt,
  innerleftmargin=10pt,
  innerrightmargin=10pt,
  innertopmargin=5pt,
  innerbottommargin=5pt,
]{comentario}
\newtheorem*{questionC}{\textbf{Problem C}}
\newtheorem*{questionCinZ}{\textbf{Problem C in $\mathbb{Z}_{\rho}[[z_1,\ldots,z_m]]$}}
\newtheorem{theorem}{\textbf{Theorem}}
\newtheorem{corollary}{\textbf{Corollary}}
\newtheorem{remark}{\textbf{Remark}}
\newtheorem{lemma}{\textbf{Lemma}}
\def\A {\mathbb{A}}
\def\N {\mathbb{N}}
\def\Z {\mathbb{Z}}
\def\Q {\mathbb{Q}}
\def\R {\mathbb{R}}
\def\QQ {\overline{\Q}}
\def\C {\mathbb{C}}
\def\K {\mathbb{K}}
\theoremstyle{remark}
\numberwithin{equation}{section}
\begin{document}
	
	\title[Exceptional Sets of Transcendental Analytic Functions]{On the Exceptional Sets of Transcendental Analytic Functions in Several Variables with Integer Coefficients}

	\author[J. Lelis]{Jean Lelis}
	\address{Faculdade de Matemática/ICEN/UFPA, Belém - PA, Brazil.}
	\email{jeanlelis@ufpa.br}

         \author[B. De Paula Miranda]{Bruno De Paula Miranda}
	\address{Instituto Federal de Goi\'{a}s, Avenida Saia Velha, Km 6, BR-040, s/n, Parque Esplanada V, Valpara\'{i}so de Goi\'{a}s, GO 72876-601, Brazil}
	\email{bruno.miranda@ifg.edu.br}
	
	\author[C. G. MOREIRA]{CARLOS GUSTAVO MOREIRA}
\address{Instituto de Matemática Pura e Aplicada IMPA, Rio de Janeiro-RJ, Brasil.}
\email{gugu@impa.br}

	\subjclass[2020]{Primary 11J81, Secondary 32A15}
	
	\keywords{Exception set, algebraic, transcendental, transcendental functions of several variables}
	
	\begin{abstract}
In 2020, Marques and Moreira proved that every subset of $\QQ \cap B(0,1)$, which is closed under complex conjugation and contains $0$, is the exceptional set of uncountably many transcendental analytic functions with integer coefficients. In this paper, we extend this result to transcendental analytic functions in several variables. In particular, we show that every subset of $\QQ^m$ contained in the unit polydisc $\Delta(0;1)$, closed under complex conjugation and containing the zero vector, is the exceptional set of uncountably many transcendental analytic functions in several variables with integer coefficients.
\end{abstract}
	
	\maketitle
	
	\section{Introduction}

A \textit{transcendental} analytic function in $m$ complex variables is an analytic function $f:\Omega\subseteq\C^{m}\to\C$ such that the only complex polynomial $P(X_1,\ldots,X_{m+1})\in\C[X_1,\ldots,X_{m+1}]$ satisfying 
\[
P(z_1,\ldots,z_m,f(z_1,\ldots,z_m))=0 
\]
for all $(z_1,\ldots,z_m)\in\Omega$ is the zero polynomial. The study of the arithmetic behavior of transcendental analytic functions in one complex variable can be traced back to an 1886 letter from Weierstrass to Strauss, in which Weierstrass proved the existence of such functions that map $\Q$ into itself. Motivated by this kind of problem, he defined the \textit{exceptional set} of an analytic function in one complex variable $f:\Omega\subseteq\C \to \C$ as
\[
S_f := \{\alpha\in \QQ\cap \Omega : f(\alpha)\in \QQ\}.
\]
Therein, Weierstrass conjectured the existence of a transcendental entire function whose exceptional set is $\QQ$. This conjecture was confirmed in 1895 by St\"ackel \cite{stackel1895}.

In his book \cite{mahler1976}, Mahler studied various problems related to exceptional sets of transcendental functions, particularly those with coefficients belonging to a fixed subset of the complex numbers. Among these, he formulated three problems, the third of them is the following:

\begin{questionC}
For any choice of $S\subseteq \QQ \cap B(0, \rho)$ that is closed under complex conjugation and contains 0, where $\rho \in (0, \infty]$, does there exist a transcendental analytic function $f$ with rational coefficients and radius of convergence $\rho$ such that $S_f = S$?
\end{questionC}

In 2016, Marques and Ramirez \cite{ramirez2016} answered this question affirmatively when $\rho = \infty$ (i.e., for entire functions). Their result was later extended by Marques and Moreira in \cite{gugu2018}, who provided an affirmative answer to Mahler's Problem {\sc C} for any $\rho \in (0, \infty]$.

Let $f:\Omega\subseteq\C^m\to\C$ be an analytic function in $m$ complex variables. We define its exceptional set as
\[
S_f := \{(\alpha_1,\ldots,\alpha_m)\in\QQ^m \cap \Omega : f(\alpha_1,\ldots,\alpha_m)\in\QQ\}.
\]

In \cite{lelis1}, Alves et al. solved the analogue of Mahler's  problem {\sc C} for functions of several variables. They proved that for any subset $S \subseteq \QQ^m$ that is closed under complex conjugation and contains the zero vector, there exist uncountably many transcendental entire functions in $m$ complex variables with rational coefficients whose exceptional set is $S$. Moreover, it follows immediately from the result proved in \cite{gugu2018} that, given any polydisc centered at the origin, one can restrict the domain of analyticity of such functions to this fixed polydisc.

In this paper, we consider Mahler's Problem C for transcendental analytic functions in several variables with integer coefficients. To clarify the exposition, we recall some definitions. Given $w = (w_1, \ldots, w_m) \in \C^m$ and $\rho = (\rho_1, \ldots, \rho_m)\in\R_{>0}^m$, we define the \textit{open polydisc} centered at $w$ with \textit{polyradius} $\rho$ as the set
\[
\Delta(w;\rho) := \{ z = (z_1, \ldots, z_m) \in \C^m : |z_i - w_i| < \rho_i \ \forall \ 1 \leq i \leq m \}.
\] When $\rho = (1, \ldots, 1)$ we write $\Delta(w; 1)$. Polydiscs centered at the origin, which we denote by $\Delta(0;\rho)$, are the natural regions of convergence for power series in several variables.

Let $\K_{\rho}[[z_1, \ldots, z_m]]$ denote the set of power series
\[
f(z_1,\ldots, z_m) = \sum_{(k_1,\ldots,k_m) \in \N_0^m} f_{(k_1,\ldots,k_m)} z_1^{k_1}\cdots z_m^{k_m},
\]
with coefficients in $\K$ and polyradius of convergence $\rho=(\rho_1,\ldots,\rho_m)\in\R_{>0}^m$, where $\N_0 = \{0\} \cup \N$. We denote by $\A_{\rho}^m$ the set of algebraic numbers in the polydisc $\Delta(0; \rho)$, and by $\A^m$ the set of algebraic numbers in $\Delta(0;1)$, the unit polydisc. Moreover, we write $\Z\{z_1, \ldots, z_m\}$ for the set of power series with integer coefficients and polyradius of convergence $\rho = (1,\ldots,1)$. With this notation, our main objective in this paper is to study the following multivariable version of Mahler’s Problem {\sc C}.

\begin{questionCinZ}
Let $m$ be a positive integer and let $\rho$ be an $m$-tuple in $(0,1]^m$. Does there exist, for any choice of $S \subseteq \A_{\rho}^m$ closed under complex conjugation and containing the zero vector $(0, \ldots, 0)$, a transcendental function $f \in \Z_{\rho}[[z_1, \ldots, z_m]]$ for which $S_f = S$?
\end{questionCinZ}

Note that the conditions of being closed under complex conjugation and containing the zero vector are necessary, since the power series has integer coefficients. In 2020, Marques and Moreira \cite{gugu20} answered Mahler’s Problem {\sc C} in $\Z\{z\}$ affirmatively. Specifically, they proved that any subset of $\A$ (closed under complex conjugation and containing $0$) is the exceptional set of a transcendental analytic function in $\Z\{z\}$. Our goal is to generalize this result to several variables. More precisely, we prove the following result.

\begin{theorem}[Main Theorem]\label{maintheo}
Let $m$ be a positive integer and let $\rho$ be an $m$-tuple in $(0,1]^m$. Then, every subset of $\A_{\rho}^m$ that is closed under complex conjugation and contains the zero vector is the exceptional set of uncountably many transcendental functions in $\Z_{\rho}[[z_1, \ldots, z_m]]$.
\end{theorem}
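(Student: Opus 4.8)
The plan is to adapt the one‑variable strategy of Marques–Moreira \cite{gugu20} to the polydisc setting, combining it with the multivariable bookkeeping of \cite{lelis1}. Enumerate the countable set $\A_\rho^m = \{\beta_0 = (0,\ldots,0), \beta_1, \beta_2, \ldots\}$ (the list is finite or infinite; treat the finite case by a trivial modification), and write $S = \{\alpha^{(0)} = 0, \alpha^{(1)}, \alpha^{(2)}, \ldots\}$ for the target exceptional set, which by hypothesis contains $0$ and is stable under conjugation. We shall construct $f$ as a series with integer coefficients, built block by block as a sum
\[
f(z_1,\ldots,z_m) = \sum_{n \geq 1} c_n\, P_n(z_1,\ldots,z_m),
\]
where each $P_n \in \Z[z_1,\ldots,z_m]$ is a carefully chosen polynomial and $c_n \in \Z$ is a large integer weight. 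The key algebraic device is this: for a point $\beta = (b_1,\ldots,b_m)$ with $b_i$ algebraic of degree $d_i$, one can find a nonzero polynomial $Q_\beta \in \Z[z_1,\ldots,z_m]$ vanishing at $\beta$ and at all its conjugates, e.g. a product $\prod_i \mathrm{minpol}_{b_i}(z_i)$ (up to clearing denominators); whereas to force a prescribed rational (indeed integer, after scaling) value at a point of $S$ one uses interpolation polynomials with integer coefficients. Because we only need finitely many constraints active at stage $n$, at each step we have enough freedom to choose $P_n$ and then $c_n$ large enough (divisible by a suitable factorial‑type integer) so that the tail $\sum_{k > n} c_k P_k$ is negligibly small on the relevant finite set of algebraic points, guaranteeing that the sign/size of $f(\beta) - (\text{target})$ is controlled.

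The construction proceeds by a diagonal/interleaving induction. At even stages we \emph{kill} a point: if $\beta_j \notin S$, we arrange that $f(\beta_j)$ is transcendental (or at least irrational) by ensuring its base‑$c$‑type expansion is Liouville‑like — concretely, we make the partial sums $\sum_{k \leq n} c_k P_k(\beta_j)$ approach a value whose continued‑fraction or digit behavior precludes algebraicity, using the fact that each new term can be made extremely small relative to the previous ones and that we may still wiggle the choice of $P_n$ off $\beta_j$. At odd stages we \emph{hit} a point: if $\alpha^{(i)} \in S$, we add a block that is divisible by $Q_{\beta}$ for every $\beta$ already handled that must keep its value, and that adjusts $f(\alpha^{(i)})$ to a rational number, being careful that conjugates $\overline{\alpha^{(i)}}$ receive the conjugate value automatically since all coefficients are integers. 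One must also interleave a step guaranteeing transcendence of $f$ itself as a function: this is standard — force $f$ to grow too fast along some sequence, or more simply, ensure one coefficient pattern that no algebraic relation $P(z, f(z)) = 0$ can accommodate, e.g. by making the coefficients of $f$ violate the Eisenstein‑type bounds satisfied by algebraic functions. The uncountability is then obtained in the usual way: at infinitely many "free" stages there is a binary choice of $c_n$ (say, between two admissible large values of the same divisibility type) that does not affect any of the exceptional‑set constraints, yielding a Cantor set of distinct functions all with $S_f = S$.

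The main obstacle, and where the real work lies, is the simultaneous control of \emph{all} the arithmetic conditions at the algebraic points of the polydisc: when we add a correction block to hit $\alpha^{(i)}$, we must not disturb the already‑secured values at $\beta_0,\ldots,\beta_{j}$, nor spoil the irrationality we have engineered at the points outside $S$. In one variable this is handled by multiplying corrections by high powers of $\prod (z - \beta_k)\cdot(\text{denominators})$; in several variables the analogous vanishing polynomial $Q$ must vanish at a point together with all its Galois conjugates \emph{and} we need quantitative lower bounds for $|Q(\gamma)|$ at the \emph{other} algebraic points $\gamma$ in the polydisc, uniformly, to certify that later corrections stay small — this is a Liouville‑type inequality in several variables, and keeping the denominators and heights under control through the induction is the crux. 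I expect the bulk of the proof to be the estimates showing that the weights $c_n$ can be chosen growing fast enough for convergence in $\Delta(0;\rho)$ (using $\rho_i \leq 1$) yet with the tails small enough at every finite stage's finite set of test points, so that every membership and non‑membership in $S$ is decided correctly and permanently.
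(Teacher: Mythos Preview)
Your outline has a genuine gap at the step you yourself flag as ``the main obstacle''. You propose to protect already-handled points by multiplying each new correction block by a polynomial $Q_\beta\in\Z[z_1,\ldots,z_m]$ vanishing at $\beta$, e.g.\ a product of minimal polynomials of the coordinates. But any polynomial with integer (or rational) coefficients that vanishes at an algebraic point $\beta$ must also vanish at all of its Galois conjugates, not just at $\overline\beta$. Concretely, already for $m=1$ and $S=\{0,\,1/\sqrt2\}$: once you have fixed the value at $1/\sqrt2$, every subsequent integer-coefficient polynomial correction that vanishes at $1/\sqrt2$ automatically vanishes at $-1/\sqrt2$ as well, so you can never again adjust $f(-1/\sqrt2)$ to be transcendental. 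In several variables the same phenomenon occurs coordinatewise (and worse: your $\prod_i\mathrm{minpol}_{b_i}(z_i)$ vanishes on an entire union of hypersurfaces). Thus the product-of-$Q_\beta$'s may well be identically zero at the next point you need to hit, and no Liouville-type lower bound can save you because the value is exactly~$0$, not merely small. This is not a bookkeeping issue; it is a structural obstruction to doing the construction with polynomial annihilators.

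The paper's proof bypasses this entirely, and in doing so also removes the need for the Liouville inequalities you anticipate. The key lemma (Lemma~\ref{lem2}) produces, for each $\alpha\in\A^m$, a \emph{power series} $f_\alpha\in\Z\{z_1,\ldots,z_m\}$ (not a polynomial) whose zero set inside $\A^m$ is exactly $\{\alpha,\overline\alpha\}$, together with an explicit growth bound $|f_\alpha(z)|\le C_\alpha/(1-\|z\|)^4$. With these in hand the construction is clean: writing $\A_\rho^m=\{\alpha_0,\alpha_1,\ldots\}$, one takes
\[
f(z)=g(z)+\sum_{k\ge1} z^{\theta_k}\,h_k(z)\prod_{j=0}^{k-1} f_{\alpha_j}(z),
\]
where $g$ fixes the polyradius (Lemma~\ref{lem3}), the product $\prod_{j<k}f_{\alpha_j}$ vanishes at all previously treated points (and \emph{only} there, among algebraic points), $h_k$ is a bounded-coefficient interpolant from Lemma~\ref{lem1} steering $f(\alpha_k)$ into $\QQ$ or $\C\setminus\QQ$ as required, and the monomial $z^{\theta_k}$ with $|\theta_k|$ large ensures convergence. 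Transcendence of $f$ is obtained not by growth or Eisenstein-type arguments but by the countability of algebraic power series with integer coefficients (Corollary~\ref{c1}), and uncountability comes from the infinitely many admissible choices of $\theta_k$. In short: replace your polynomial $Q_\beta$'s by the power-series annihilators of Lemma~\ref{lem2}; once you do, the Liouville estimates and the Liouville-number-style forcing of transcendental values both become unnecessary.
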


Our proof relies on two key results. The first is a multivariable extension of a classical lemma by Lekkerkerker \cite{lekk}, which concerns interpolation over finite sets using power series in one variable with bounded integer coefficients. The second is a multivariable adaptation of a lemma from \cite{gugu20}, which guarantees that for any $\alpha \in \A^m$, there exists a power series $f \in \Z\{z_1, \ldots, z_m\}$ such that $f(z) = 0$ for $z \in \A^m$ if and only if $z \in \{\alpha, \overline{\alpha}\}$. In the next section, we prove these lemmas; in the following section, we establish our main theorem.

\section{Key Lemmas}

In this section, we shall provide some lemmas which are essential ingredients in our proof (they are theoretical results which may be of some interest). The proof of the Lemma \ref{lem1} is based on the analogous result of Lekkerkerker \cite{lekk} for one variable.

\begin{lemma}\label{lem1}
    Let $m$ be a positive integer, and let $(\alpha_1,\beta_1),\ldots,(\alpha_n,\beta_n)$ be finitely many pairs in $\Delta(0;1) \times \C$ satisfying the following conditions:
    \begin{enumerate}[(i)]
        \item The points $\alpha_1, \ldots, \alpha_n$ are distinct and non-zero.
        \item If $\alpha_k$ is a real, then $\beta_k$ is real.
        \item If $\alpha_k$ is non-real, there exists an index $\ell$ such that $\alpha_\ell$ and $\beta_\ell$ are the complex conjugates of $\alpha_k$ and $\beta_k$, respectively.
    \end{enumerate}
    Then there exists a power series $f \in \Z\{z_1, \ldots, z_m\}$ with bounded coefficients such that 
    \[f(\alpha_k) = \beta_k\]
    for all $1 \leq k \leq n$.
\end{lemma}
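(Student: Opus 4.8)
The plan is to isolate the hard core of the lemma --- the ``good case'' in which all the $\alpha_j$ share a nonzero coordinate --- prove it by a several-variables version of Lekkerkerker's iterative-rounding argument, and then deduce the general statement by induction on $m$, peeling off the points lying on a coordinate hyperplane. For the inductive step, after relabelling so that I look at the last coordinate, I split the indices into $A=\{j:(\alpha_j)_m\ne0\}$ and $B=\{j:(\alpha_j)_m=0\}$ and write $\alpha_j=(\alpha_j',(\alpha_j)_m)$ with $\alpha_j'\in\C^{m-1}$. The truncations $\{\alpha_j':j\in B\}$ are still distinct, nonzero, and satisfy (ii)--(iii) (a point of $B$ is real iff its truncation is, and conjugation preserves $B$), so the inductive hypothesis in $m-1$ variables supplies $g\in\Z\{z_1,\ldots,z_{m-1}\}$ with bounded coefficients and $g(\alpha_j')=\beta_j$ for $j\in B$; I regard $g$ as an element of $\Z\{z_1,\ldots,z_m\}$. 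If $A\ne\emptyset$ I apply the good case to $\{\alpha_j:j\in A\}$ (all of which have nonzero $m$-th coordinate) with the modified targets $(\beta_j-g(\alpha_j))/(\alpha_j)_m$ --- these again satisfy (ii)--(iii), because $g$ has real coefficients --- to get $h\in\Z\{z_1,\ldots,z_m\}$ with bounded coefficients. Then $f:=g+z_m h$ works: it has integer coefficients, bounded because $g$ and $z_m h$ involve disjoint monomials, and $f(\alpha_j)=\beta_j$ for every $j$, since $z_m h$ vanishes at the points of $B$ and contributes exactly $\beta_j-g(\alpha_j)$ at the points of $A$. The base $m=1$ is automatically the good case, as a nonzero scalar does not vanish.

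For the good case, assume $(\alpha_j)_m\ne0$ for all $j$ and put $\rho_0=\max_j|(\alpha_j)_m|<1$, $\rho_1=\min_j|(\alpha_j)_m|>0$. Since the $\alpha_j$ are distinct, polynomials separate them, so the evaluation vectors $(\mathbf z^{\mathbf k}(\alpha_1),\ldots,\mathbf z^{\mathbf k}(\alpha_n))$, $\mathbf k\in\N_0^m$, span $\C^n$; I fix monomials $\mathbf p_1,\ldots,\mathbf p_n$ whose evaluation matrix $P=(\mathbf p_i(\alpha_j))_{i,j}$ is invertible, together with an integer $c>\max_i\deg\mathbf p_i$, so that the monomials $z_m^{cs}\mathbf p_i$ ($s\ge1$, $1\le i\le n$) are pairwise distinct. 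I then build $f=\sum_{s\ge1}\sum_{i=1}^n a_i^{(s)}z_m^{cs}\mathbf p_i$ stage by stage, tracking the residuals $r_j^{(s)}=\beta_j-\sum_{t=1}^s\sum_i a_i^{(t)}(\alpha_j)_m^{ct}\mathbf p_i(\alpha_j)$ (so $r_j^{(0)}=\beta_j$). At stage $s$ I solve $P^{\mathrm T}x=\big(r_j^{(s-1)}/(\alpha_j)_m^{cs}\big)_j$ over $\C$; by (ii)--(iii) the right-hand side is invariant under the conjugation symmetry permuting the $\alpha_j$, so the unique solution $x$ is real, and since $\rho_1>0$ a direct estimate bounds $\|x\|_\infty$ by a constant independent of $s$. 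Setting $a_i^{(s)}=\lfloor x_i\rceil\in\Z$, the rounding error $|x_i-a_i^{(s)}|\le\tfrac12$ gives $r_j^{(s)}=(\alpha_j)_m^{cs}\sum_i(x_i-a_i^{(s)})\mathbf p_i(\alpha_j)$, hence $|r_j^{(s)}|\le\tfrac n2\rho_0^{cs}\to0$. The uniform bound on the $a_i^{(s)}$ makes $f$ an element of $\Z\{z_1,\ldots,z_m\}$ with bounded coefficients, and $f(\alpha_j)=\beta_j-\lim_s r_j^{(s)}=\beta_j$ for all $j$.

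The main obstacle is precisely this good case: one must drive the residuals to $0$ while keeping \emph{all} coefficients bounded at once, and that is exactly what Lekkerkerker's device achieves. Pulling the common factor $z_m^{cs}$ out of the $s$-th linear system collapses it to the \emph{fixed} system governed by $P$, whose right-hand sides stay uniformly bounded only because $\alpha_j\ne0$ forces $\rho_1>0$ --- this is the role of hypothesis (i), while (ii)--(iii) are tailored to make the real solvability and the conjugation bookkeeping go through. The remaining ingredients are routine: the fact that polynomials separate finite sets of points, the check that the conjugation/reality conditions survive both the truncation $\alpha_j\mapsto\alpha_j'$ and the change of targets $\beta_j\mapsto(\beta_j-g(\alpha_j))/(\alpha_j)_m$, and the elementary convergence estimates indicated above.
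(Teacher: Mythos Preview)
Your argument is correct, and it is genuinely different from the paper's. The paper does \emph{not} induct on $m$ or isolate a ``good case''. Instead, for $m\ge 2$ it chooses affine hyperplanes $H_k=\{h_k=0\}$ with $\alpha_j\in H_k\iff j=k$, $0\notin H_k$, and the appropriate reality/conjugation symmetry, sets $q=\prod_k h_k$ and builds a Lagrange-type interpolant $p$ with $p(\alpha_k)=\beta_k$; then it writes $f=p+qg$ and, using that $b_0:=q(0)\neq 0$, solves monomial by monomial for the real coefficients of $g$ so that every coefficient of $f$ lands in $\Z$ with $|g_\theta|\le 1/(2|b_0|)$, which forces the coefficients of $f$ to be uniformly bounded. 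So the paper handles all points at once via the single polynomial $q$ vanishing at every $\alpha_k$, and the ``nonzero at the origin'' condition on $q$ plays the role that your $\rho_1>0$ plays in bounding the linear systems.

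What each approach buys: the paper's route is shorter and dimension-uniform (no induction, no case split), and it makes transparent---as the paper exploits in the very next remark---that the constant term $f_{(0,\ldots,0)}$ can be prescribed to be any integer by choosing $g_{(0,\ldots,0)}$ accordingly. Your route stays closer in spirit to Lekkerkerker's original iterative rounding: the fixed monomial basis $\mathbf p_1,\ldots,\mathbf p_n$ and the common factor $z_m^{cs}$ reduce every stage to the \emph{same} invertible system $P^{\mathrm T}x=\cdot$, and the key observation that $r_j^{(s)}$ carries the factor $(\alpha_j)_m^{cs}$ (not merely $\rho_0^{cs}$) is exactly what makes $\|x\|_\infty$ bounded independently of $s$. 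The induction on $m$ then neatly disposes of points on a coordinate hyperplane. Both proofs use hypotheses (ii)--(iii) in the same way, to force the unique solutions of certain linear systems with real coefficients to be real.
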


\begin{proof}
When $m = 1$, this is precisely the result proved by Lekkerkerker in \cite{lekk}. Therefore, we assume $m\geq 2$. Let $(\alpha_1, \beta_1), \ldots, (\alpha_n, \beta_n)$ be pairs in $\Delta(0;1) \times \C$ satisfying the assumptions of the lemma. Then there exist complex hyperplanes $H_k$, defined as the zero sets of the polynomials
\[
h_k(z_1,\ldots,z_m)=a_{k,1} z_1 + \cdots+b_{k,m} z_m + c_k,
\]
for $1\leq k\leq n$ such that
\begin{enumerate}[(a)]
    \item $H_k$ does not contain the origin, and $\alpha_j \in H_k$ if and only if $j = k$;
    \item If $\alpha_k$ is real, then $h_k$ has real coefficients;
    \item If $\alpha_\ell = \overline{\alpha_k}$, then $h_\ell = \overline{h_k}$.
\end{enumerate}

Using these hyperplanes, we define the following polynomials
\[
q(z) = \prod_{k=1}^n h_k(z), \quad
q'(z) = \sum_{k=1}^n \prod_{\substack{j=1 \\ j \neq k}}^n h_j(z), \quad
p(z) = \sum_{k=1}^n \beta_k \frac{q(z)}{q'(\alpha_k) h_k(z)},
\]
where $z = (z_1,\ldots,z_m)$, and the expression is well-defined because $q'(\alpha_k) \neq 0$ for all $k$ by construction. From the symmetry conditions on the $H_k$, it follows that $p$ and $q$ have real coefficients, and they satisfy
\[
q(\alpha_k) = 0, \quad p(\alpha_k) = \beta_k \quad \text{for all } 1 \leq k \leq n.
\]

We now define the desired power series $f \in \Z\{z_1,\ldots,z_m\}$ as
\begin{equation}\label{eqf}
f(z) = p(z) + q(z)g(z),
\end{equation}
where $g \in \R[[z_1,\ldots,z_m]]$ will be chosen so that $f$ has bounded integer coefficients. Writing
\[
g(z) = \sum_{\theta_k \in \N_0^m} g_{\theta_k} z^{\theta_k}, \quad
f(z) = \sum_{\theta_{k} \in \N_0^m} f_{\theta_k} z^{\theta_k},
\]
and similarly expanding
\[
p(z) = \sum_{|\theta_k| \leq n-1} a_{\theta_k} z^{\theta_k}, \quad
q(z) = \sum_{|\theta_k| \leq n} b_{\theta_k} z^{\theta_k},
\]
where $|\theta_k|=t_{1,k}+\cdots+t_{m,k}$ for $\theta_k=(t_{1,k},\ldots,t_{m,k})\in\N_0^m$. We substitute into \eqref{eqf} and obtain recursive equations for $f_{\theta_k}$ in terms of $a_{\theta_k}$, $b_{\theta_k}$, and $g_{\theta_k}$ given by
\[
f_{\theta_k} = a_{\theta_k}+g_{\theta_k} b_0 + \sum_{\phi+\sigma=\theta_k} g_{\phi} b_{\sigma},
\]
where $b_0 = b_{(0,\ldots,0)} \neq 0$ by definition of $q(z)$ and we consider $a_{\theta_k} = 0$ for $|\theta_k| \geq n$.

Therefore, We can recursively choose the coefficients $g_{\theta_k} \in \R$ such that $f_{\theta_k} \in \Z$ and
\[
-\frac{1}{2|b_0|} \leq g_{\theta_k} \leq \frac{1}{2|b_0|} \quad \text{for all } \theta_k \in \N_0^m.
\]
This ensures that all coefficients $f_{\theta_k}$ are integers and uniformly bounded
\[
|f_{\theta_k}| \leq |\alpha_{\theta_k}|+ \left( \sum_{\phi+\sigma=\theta_k} |b_{\sigma}| \right) \cdot \max_{\phi} |g_{\phi}| \leq |\alpha_{\theta_k}|+ \frac{L(q)}{2|b_0|},
\]
where $L(q)$ denotes the length (i.e., the sum of absolute values of the coefficients) of the polynomial $q$.

It follows that $f$ has bounded coefficients, and the same holds for $g$. Hence, both $f$ and $g$ converge on the unit polydisc $\Delta(0;1)$. Moreover, by construction,
\[
f(\alpha_k) = \beta_k \quad \text{for all } 1 \leq k \leq n.
\]
This concludes the proof.
\end{proof}

\begin{remark}\label{rem1}
In the construction of the function $f$ in Lemma \ref{lem1}, it is worth noting that the constant term $f_0 = f_{(0,\ldots,0)}$ can be chosen to be any prescribed integer, by adjusting the bound on the coefficients of $f$ if necessary. Indeed, it suffices to select the coefficient $g_0 = g_{(0,\ldots,0)}$ so that
\[
f_0 = a_0 + g_0 b_0
\]
matches the desired value. This flexibility will be used in the proof of the next lemma.
\end{remark}

The following lemma is a multivariable version of Lemma 2 in \cite{gugu20}. In particular, it also holds in the univariate case for analytic functions with integer coefficients, and we shall use this fact in the proof for the multivariable case. Throughout, we consider the maximum norm on $\C^m$.

\begin{lemma}\label{lem2}
    Let $m$ be a positive integer and $\alpha \in \A^m$. Then there exists a function $f \in \Z\{z_1,\ldots,z_m\}$ such that $f(z) = 0$ for $z \in \A^m$ if and only if $z \in \{\alpha,\overline{\alpha}\}$. Moreover, there exists a positive constant $C$ depending only on $\alpha$ such that $|f(z)| \leq C/(1-\|z\|)^4$ for all $z \in \Delta(0;1)$.
\end{lemma}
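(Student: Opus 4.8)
The plan is to reduce the multivariable statement to the univariate case of the same lemma (which the authors have proved in \cite{gugu20}), exactly as Lemma \ref{lem1} was reduced. First I would fix $\alpha = (\alpha_1,\ldots,\alpha_m) \in \A^m$ and produce a single linear form $h(z_1,\ldots,z_m) = a_1 z_1 + \cdots + a_m z_m$ whose value at $\alpha$ is some algebraic number $\gamma := h(\alpha)$ lying in $\Delta(0;1) \subseteq \C$, and — crucially — such that $h$ separates the conjugates: more precisely, we want the fibre structure to be controlled so that knowing $h(z)$ and $\overline{h(z)}$ pins down whether $z \in \{\alpha,\overline\alpha\}$. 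This requires a little care: a single linear form will collapse a whole hyperplane to a point, so one cannot hope that $h(z) = \gamma$ forces $z = \alpha$. Instead I would take finitely many linear forms $h^{(1)},\ldots,h^{(m)}$ with algebraic coefficients whose common zero locus of $h^{(j)} - h^{(j)}(\alpha)$ inside $\A^m$ is exactly $\{\alpha\}$ (possible since $\{\alpha\}$ is cut out by the $m$ affine-linear equations $z_i = \alpha_i$, and one can arrange the forms to have the right symmetry under conjugation), and then invoke the univariate Lemma \ref{lem2} for each $\gamma_j := h^{(j)}(\alpha)$ to get $g_j \in \Z\{w\}$ vanishing on $\A^1$ precisely at $\{\gamma_j, \overline{\gamma_j}\}$, with the quartic growth bound $|g_j(w)| \le C_j/(1-|w|)^4$.

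Next I would set $f(z) := \prod_{j=1}^m g_j\big(h^{(j)}(z)\big)$, or perhaps a suitable finite sum of such products, chosen so that $f$ vanishes at $z \in \A^m$ exactly when every factor we need vanishes, i.e.\ exactly on the intersection $\{\alpha,\overline\alpha\}$. The composition $g_j \circ h^{(j)}$ is a power series in $z_1,\ldots,z_m$; since the $h^{(j)}$ have algebraic (in particular bounded) coefficients and map $\Delta(0;1)$ into a disc of radius $< 1$ after rescaling, the composite converges on (a polydisc inside) $\Delta(0;1)$. The integrality of the coefficients of $f$ is the point that needs the most attention: $g_j$ has integer coefficients but $h^{(j)}$ does not, so $g_j(h^{(j)}(z))$ will generally have algebraic coefficients. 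To fix this I would clear denominators and take norms over $\Q$ — replace each $g_j(h^{(j)}(z))$ by the product of its Galois conjugates over $\Q(\text{coefficients})$, or multiply through by an appropriate integer and then form the full norm — so that the resulting series has rational, indeed (after scaling) integer coefficients, while the zero set within $\A^m$ only gains the Galois conjugates of $\alpha$, which by the closure-under-conjugation setup are just $\{\alpha,\overline\alpha\}$ again (or can be arranged to be, by choosing $\alpha$'s field embedding appropriately and using that the relevant conjugates outside $\{\alpha,\overline\alpha\}$ either lie outside $\Delta(0;1)$ or can be excluded). Here I would lean on the argument already used for Lemma \ref{lem1}, where symmetric products of conjugate data produced real (hence, after the recursive adjustment, integer) coefficients.

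Then I would verify the growth estimate. Each factor satisfies $|g_j(h^{(j)}(z))| \le C_j/(1 - |h^{(j)}(z)|)^4$, and since $|h^{(j)}(z)| \le (\sum_i |a^{(j)}_i|)\,\|z\|$, one gets $1 - |h^{(j)}(z)| \ge c_j(1 - \|z\|)$ for a constant $c_j > 0$ on $\Delta(0;1)$ (possibly after the initial rescaling of the forms so that $\sum_i|a^{(j)}_i| \le 1$, which also keeps the image inside the unit disc). Multiplying the bounds over the finitely many factors and the finitely many Galois conjugates introduced in the norm step yields $|f(z)| \le C/(1-\|z\|)^{4N}$ for some $N$; to land exactly the exponent $4$ claimed in the statement I would instead form $f$ from a single composite factor $g(h(z))$ plus lower-order correction terms, or note that the authors likely only need \emph{some} fixed polynomial bound and the ``$4$'' is the cost of one application of the univariate lemma — so I would keep the construction minimal (one main $g_j$ doing the vanishing, the others only present to cut down the zero locus but chosen with even faster-decaying or polynomial-free contributions, e.g.\ via Lemma \ref{lem1} interpolating nonzero values off $\alpha$). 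The main obstacle, as flagged, is the tension between (a) making the coefficients integral, which pushes toward taking Galois norms and thus inflates both the zero set and the growth exponent, and (b) keeping the zero set inside $\A^m$ equal to exactly $\{\alpha,\overline\alpha\}$ and the growth exponent at $4$; resolving it cleanly is what the actual proof will have to do, presumably by a more economical construction than the naive product-of-compositions and by exploiting that $\alpha$'s conjugates over $\Q$ other than $\overline\alpha$ need not lie in the unit polydisc, or by building the denominator-clearing into the recursive coefficient choice as in Lemma \ref{lem1} and Remark \ref{rem1}.
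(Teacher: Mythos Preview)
Your plan has a structural gap that none of the fixes you sketch can repair. A product $\prod_{j} g_j\big(h^{(j)}(z)\big)$ vanishes whenever \emph{any} factor vanishes, i.e.\ on the union of the hypersurfaces $\{h^{(j)}(z)\in\{\gamma_j,\overline{\gamma_j}\}\}$. Each such hypersurface contains infinitely many algebraic points of $\Delta(0;1)$, so the zero set inside $\A^m$ is never $\{\alpha,\overline\alpha\}$. You want the \emph{intersection} of these loci, not the union, but there is no algebraic combination of the $g_j$'s that vanishes exactly on the intersection: a sum of squares does not work over $\C$, and a ``suitable finite sum of such products'' is left unspecified precisely because no finite $\C$-linear (or polynomial) combination achieves this. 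Taking Galois norms to repair integrality only enlarges the zero set further (and your hope that the extra conjugates of $\alpha$ fall outside $\Delta(0;1)$ is false in general). The exponent issue ($4N$ versus $4$) is real too, but it is secondary to the zero-set problem.

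The paper takes a completely different route. It does use the univariate lemma, but only coordinatewise with the trivial forms $z\mapsto z_i$: let $h_i\in\Z\{w\}$ vanish on $\A$ exactly at $\{\alpha_i,\overline{\alpha_i}\}$. The key observation is that the ``bad'' finite set $H(\alpha)=\{z:z_i\in\{0,\alpha_i,\overline{\alpha_i}\}\ \forall i\}$ is exactly where every term $h_i(z_i)z_i^{t}$ vanishes. So the authors first use Lemma~\ref{lem1} (and Remark~\ref{rem1}) to build a bounded-coefficient $g\in\Z\{z_1,\dots,z_m\}$ with $g(\alpha)=g(\overline\alpha)=0$ and $g\equiv 1$ on $H(\alpha)\setminus\{\alpha,\overline\alpha\}$, and then set
\[
f(z)=g(z)+\sum_{i=1}^m\sum_{j\ge 1} h_i(z_i)\,z_i^{t_{i,j}}.
\]
On $H(\alpha)$ the double sum is identically zero, so $f=g$ there and the zeros are exactly $\{\alpha,\overline\alpha\}$. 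Off $H(\alpha)$ some $h_i(z_i)z_i\neq 0$, and the exponents $t_{i,j}$ are chosen inductively along an enumeration of $\A^m\setminus H(\alpha)$ so that at each $\beta_n$ the partial sum is nonzero and the tail is too small to cancel it. The growth bound $C/(1-\|z\|)^4$ falls out because $g$ costs one power, each $h_i$ costs three, and $\sum_j |z_i|^{t_{i,j}}\le 1/(1-|z_i|)$ costs one more --- but these are \emph{added}, not multiplied, so the exponent stays at $4$. What your approach is missing is precisely this additive-perturbative architecture, which decouples the finite interpolation problem on $H(\alpha)$ from the countable avoidance problem on its complement.
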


\begin{proof}
    Suppose that $\alpha = (\alpha_1, \ldots, \alpha_m) \in \A^m$, and consider the set
    \[
    H(\alpha) := \{(z_1,\ldots,z_m) \in \C^m : z_i \in \{0,\alpha_i,\overline{\alpha}_i\} \quad \text{for} \quad i=1,\ldots,m\}.
    \]
    By Lemma \ref{lem1} and Remark \ref{rem1}, there exists a function $g \in \Z\{z_1,\ldots,z_m\}$ with bounded coefficients such that $g(\alpha) = g(\overline{\alpha}) = 0$ and $g(z) = 1$ for all $z \in H(\alpha) \setminus \{\alpha, \overline{\alpha}\}$. In particular, there exists a positive constant $\tilde{C}$ depending only on $\alpha$ such that
    \[
    |g(z_1,\ldots,z_m)| \leq \tilde{C}/(1-\|(z_1,\ldots,z_m)\|).
    \]
    
    Moreover, by the one-variable version proved in Lemma 2 of \cite{gugu20}, there exist functions $h_i(z) \in \Z\{z\}$ such that $h_i(z) = 0$ for $z \in \A$ if and only if $z \in \{\alpha_i,\overline{\alpha}_i\}$, for all integers $1 \leq i \leq m$. Furthermore,
    \begin{equation}\label{C_i}
    |h_i(z)| \leq C_i/(1-|z|)^3,
    \end{equation}
    where $C_i$ is a constant depending only on $\alpha_i$. 

    Now, consider an enumeration of $\A^m \setminus H(\alpha)$ given by
    \begin{equation}\label{enum2}
        \A^m \setminus H(\alpha) = \{\beta_1, \beta_2, \ldots\} \cup \{\overline{\beta_1}, \overline{\beta_2}, \ldots\}.
    \end{equation}
    We construct a power series $f\in\Z\{z_1,\ldots,z_m\}$ defined by
    \begin{equation}\label{fnull}
        f(z_1, \ldots, z_m) = g(z_1,\ldots,z_m) + \sum_{i=1}^m \sum_{j=1}^\infty h_i(z_i) z_i^{t_{i,j}},
    \end{equation}
    such that $f(z)=0$ with $z\in\A^m$ if and only if $z\in\{\alpha,\overline\alpha\}$, where the sequences $(t_{i,j})_{j \geq 1}$ are strictly increasing sequences of positive integers for each $1 \leq i \leq m$, defined inductively using the enumeration \eqref{enum2}.
    
    To define  the sequences $(t_{i,j})_{j \geq 1}$ for each $1 \leq i \leq m$, let $\beta_j = (b_{1,j}, \ldots, b_{m,j})$ be the $j$-th term of the enumeration \eqref{enum2}, and set $b_j \coloneqq \max_{1 \leq i \leq m} |b_{i,j}|$. Since $\beta_1 \notin H(\alpha)$, there exist positive integers $t_{1,1}, \ldots, t_{m,1}$ such that
    \[
    B_1 \coloneqq g(\beta_1) + \sum_{i=1}^m h_i(b_{i,1}) b_{i,1}^{t_{i,1}} \neq 0.
    \]
    Moreover, since $0 < b_1 < 1$, there exists a positive integer $s_1$ such that
    \[
    m \hat{C} \frac{b_1^{s_1}}{(1-b_1)^4} < \frac{|B_1|}{2},
    \]
    where $\hat{C} = \max_{1 \leq i \leq m}\{C_i\} $ (see \eqref{C_i}).

    Similarly, since $\beta_2 \notin H(\alpha)$ and $0 < b_2 < 1$, we can choose integers $t_{1,2}, \ldots, t_{m,2}$ such that $t_{i,2} \geq \max\{t_{i,1}+1, s_1\}$ for all $1 \leq i \leq m$, and
    \[
    B_2 \coloneqq g(\beta_2) + \sum_{i=1}^m \sum_{j=1}^2 h_i(b_{i,2}) b_{i,2}^{t_{i,j}} \neq 0.
    \]
    Additionally, there exists a positive integer $s_2$ such that
    \[
    m \hat{C} \frac{b_2^{s_2}}{(1-b_2)^4} < \frac{|B_2|}{2}.
    \]

    Proceeding inductively, suppose we have defined positive integers $t_{i,j}$ for all $1 \leq j \leq n-1$ and $1 \leq i \leq m$, such that $t_{i,j+1} \geq \max\{t_{i,j}+1, s_j\}$ and
    \[
    B_k \coloneqq g(\beta_k) + \sum_{i=1}^m \sum_{j=1}^k h_i(b_{i,k}) b_{i,k}^{t_{i,j}} \neq 0,
    \]
    with $s_k$ satisfying
    \[
    m \hat{C} \frac{b_k^{s_k}}{(1-b_k)^4} < \frac{|B_k|}{2},
    \]
    for all $1 \leq k \leq n-1$.
    
    Since $\beta_n \notin H(\alpha)$, there again exist positive integers $t_{1,n}, \ldots, t_{m,n}$ such that $t_{i,n} \geq \max\{t_{i,n-1}+1, s_{n-1}\}$ and
    \[
    B_n = g(\beta_n) + \sum_{i=1}^m \sum_{j=1}^{n} h_i(b_{i,n}) b_{i,n}^{t_{i,j}} \neq 0.
    \]
    Again, there exists a positive integer $s_n$ such that 
      \[
    m \hat{C} \frac{b_n^{s_n}}{(1-b_n)^4} < \frac{|B_n|}{2}.
    \]
    Thus, we inductively construct strictly increasing sequences $(t_{i,j})_{j \geq 1}$ such that $B_j \neq 0$ and $t_{i,j} \geq s_{j-1}$ for all integers $1 \leq i \leq m$ and $j \geq 1$. Given these sequences, the power series defined by \eqref{fnull} satisfies
    \begin{align*}
        |f(z_1,\ldots,z_m)| &= \left| g(z_1,\ldots,z_m) + \sum_{i=1}^m \sum_{j=1}^\infty h_i(z_i) z_i^{t_{i,j}} \right| \\
                            &\leq |g(z_1,\ldots,z_m)| + \sum_{i=1}^m \sum_{j=1}^\infty |h_i(z_i)||z_i|^{t_{i,j}} \\
                            &\leq \frac{\tilde{C}}{(1-\|(z_1,\ldots,z_m)\|)} + \frac{m\hat{C}}{(1-\|(z_1,\ldots,z_m)\|)^4} \\
                            &\leq \frac{C}{(1-\|(z_1,\ldots,z_m)\|)^4},
    \end{align*}
    for all $(z_1,\ldots,z_m) \in \Delta(0;1)$, where $C = \tilde{C} + m\hat{C}$ is a positive constant depending only on $\alpha$. In particular, we conclude that $f \in \Z\{z_1,\ldots,z_m\}$.
     
    It remains to verify that \eqref{fnull} satisfies $f(z)=0$ with $z\in\A^m$ if and only if $z\in\{\alpha,\overline\alpha\}$. Indeed, since
    \[
    (h_1(z_1)z_1,\ldots,h_m(z_m)z_m) = (0,\ldots,0)
    \]
    for all $(z_1,\ldots,z_m) \in H(\alpha)$, it follows that $f(z) = g(z)$ for all $z \in H(\alpha)$. In particular, $f(z) = 0$ for $z \in H(\alpha)$ if and only if $z \in \{\alpha,\overline{\alpha}\}$.
    
    Now, suppose that $\beta \notin H(\alpha)$. Then, there exists a positive integer $n$ such that $\beta \in \{\beta_n, \overline{\beta}_n\}$, where $\beta_n$ is the $n$-th term of the enumeration \eqref{enum2}. Thus,
    \[
    f(\beta_n) = B_n + \sum_{i=1}^m \sum_{j = n+1}^\infty h_i(b_{i,n}) b_{i,n}^{t_{i,j}},
    \]
    with $t_{i,n+1} \geq s_n$ for all $1 \leq i \leq m$. Therefore,
    \begin{align*}
        \left| \sum_{i=1}^m \sum_{j = n+1}^\infty h_i(b_{i,n}) b_{i,n}^{t_{i,j}} \right| &\leq \sum_{i=1}^m \sum_{j = n+1}^\infty |h_i(b_{i,n})||b_{i,n}|^{t_{i,j}} \\
        &\leq \frac{\hat{C}}{(1-b_n)^3} \sum_{i=1}^m \sum_{t = s_n}^\infty b_n^t \\
        &\leq m\hat{C} \frac{b_n^{s_n}}{(1-b_n)^4} < \frac{|B_n|}{2},
    \end{align*}
    and hence
    \[
    |f(\beta_n)| \geq |B_n| - \frac{|B_n|}{2} > 0.
    \]
    Since $f(\overline{\beta}_n) = \overline{f(\beta_n)}$, it follows that $f(\overline\beta_n) \neq 0$ as well. Thus, $f(z) = 0$ for $z \in \A^m$ if and only if $z \in \{\alpha, \overline{\alpha}\}$. This completes the proof.
\end{proof}

We now present a simple lemma that will be important for controlling the radius of convergence of the functions we aim to construct with integer coefficients.

\begin{lemma}\label{lem3}
Let $m$ be a positive integer, and let $\rho = (\rho_1,\ldots,\rho_m)$ be an $m$-tuple in $(0,1]^m$. Then there exists a power series $g \in \Z_{\rho}[[z_1,\ldots,z_m]]$.
\end{lemma}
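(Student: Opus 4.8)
The plan is to exhibit $g$ explicitly, built from $m$ one–variable power series with integer coefficients, one attached to each variable, each having radius of convergence exactly the corresponding $\rho_i$. For $1\le i\le m$ and $n\ge 1$ put $a_n^{(i)}:=\lceil \rho_i^{-n}\rceil$; since $\rho_i\le 1$ we have $\rho_i^{-n}\ge 1$, so $a_n^{(i)}$ is a positive integer and satisfies $\rho_i^{-n}\le a_n^{(i)}\le 2\rho_i^{-n}$. Set $g_i(z):=\sum_{n\ge 1}a_n^{(i)}z^n$ and define
\[
g(z_1,\ldots,z_m):=\sum_{i=1}^m g_i(z_i)=\sum_{i=1}^m\sum_{n\ge 1}a_n^{(i)}z_i^n .
\]
This is a power series with integer coefficients, and it remains only to check that its polyradius of convergence equals $\rho$, i.e. that its domain of convergence is exactly the polydisc $\Delta(0;\rho)$.

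For the inclusion $\Delta(0;\rho)\subseteq(\text{domain of convergence of }g)$: if $|z_i|<\rho_i$ for every $i$, then $|a_n^{(i)}z_i^n|\le 2(|z_i|/\rho_i)^n$ with ratio $<1$, so each $g_i(z_i)$ converges absolutely, and hence so does the finite sum $g(z)$. For the reverse inclusion: if $z\notin\Delta(0;\rho)$, then $|z_i|\ge\rho_i$ for some $i$, and then $|a_n^{(i)}z_i^n|\ge\rho_i^{-n}|z_i|^n\ge 1$ for all $n$; since the monomials of $g$ indexed by pure powers of $z_i$ already fail to tend to $0$, the series $g$ does not converge at $z$. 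Therefore the domain of convergence of $g$ is precisely $\Delta(0;\rho)$, which is exactly the assertion $g\in\Z_{\rho}[[z_1,\ldots,z_m]]$.

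The argument is elementary and I do not anticipate any genuine obstacle; the only point deserving a word of care is ensuring that the polyradius is not accidentally larger than $\rho$, that is, that $g$ really diverges on and beyond the distinguished boundary of $\Delta(0;\rho)$. This is what forces the use of all $m$ variables (a series involving fewer variables would converge on a larger Reinhardt domain), and it is guaranteed by the lower bound $a_n^{(i)}\ge\rho_i^{-n}$, which makes $|a_n^{(i)}z_i^n|\ge 1$ as soon as $|z_i|\ge\rho_i$.
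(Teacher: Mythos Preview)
Your proof is correct and follows essentially the same construction as the paper: both build $g$ as a sum $\sum_{i=1}^m\sum_{n\ge 1}a_n^{(i)}z_i^n$ of $m$ single-variable series whose integer coefficients are obtained by rounding $\rho_i^{-n}$ (you use the ceiling, the paper uses the nearest integer), and both rely on the two-sided bound $a_n^{(i)}\asymp\rho_i^{-n}$ to identify the domain of convergence with $\Delta(0;\rho)$. Your write-up in fact spells out the convergence/divergence verification that the paper leaves as ``straightforward to verify'', but the underlying idea is identical.
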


\begin{proof}
For each integer $1 \leq i \leq m$, define the sequence $\{a_{i,n}\}_{n \geq 0}$ by
\[
a_{i,n} := \left\lfloor \left(\frac{1}{\rho_i}\right)^n \right\rceil,
\]
for all $n \geq 0$, where $\lfloor x\rceil$ is the integer closest to $x\in\R$. Since $\rho_i^{-n} \leq a_{i,n} \leq \rho_i^{-n} + 1$, it is straightforward to verify that the power series
\[
g(z_1,\ldots,z_m) = \sum_{i=1}^m \sum_{j=1}^\infty a_{i,j} z_i^j
\]
has polyradius of convergence equal to $\rho = (\rho_1, \ldots, \rho_m)$.
\end{proof}

The following lemma will be used to show that the set of power series with integer coefficients that are algebraic (i.e., not transcendental) is countable. This result will be applied to ensure that uncountably many of the power series we construct in the proof of the theorem are, in fact, transcendental.

\begin{lemma}
Let $m$ be a positive integer, let $\rho$ be an $m$-tuple in $(0,1]^m$, and let $\mathbb{K}$ be a subfield of $\mathbb{C}$. Suppose that $f$ is a power series in $\mathbb{K}_{\rho}[[z_1,\ldots,z_m]]$ such that
\[
P(z_1,\ldots,z_m, f(z_1,\ldots,z_m)) = 0 \quad \text{for all } (z_1,\ldots,z_m) \in \Delta(0;\rho),
\]
where $P \in \mathbb{C}[X_1,\ldots,X_m,Y]$ is a nonzero polynomial of degree $n$. Then there exists a nonzero polynomial $\tilde{P} \in \mathbb{K}[X_1,\ldots,X_m,Y]$ of degree at most $n$ such that
\[
\tilde{P}(z_1,\ldots,z_m, f(z_1,\ldots,z_m)) = 0 \quad \text{for all } (z_1,\ldots,z_m) \in \Delta(0;\rho).
\]
\end{lemma}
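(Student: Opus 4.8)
The plan is a descent of the field of definition by elementary linear algebra. Let $W$ be the $\mathbb{C}$-vector space of polynomials in $\mathbb{C}[X_1,\ldots,X_m,Y]$ of degree at most $n$, and let $W_{\mathbb{K}}\subseteq W$ be the $\mathbb{K}$-span of the monomials $X_1^{a_1}\cdots X_m^{a_m}Y^b$ with $a_1+\cdots+a_m+b\le n$. These monomials form simultaneously a $\mathbb{K}$-basis of $W_{\mathbb{K}}$ and a $\mathbb{C}$-basis of $W$, so $W=W_{\mathbb{K}}\otimes_{\mathbb{K}}\mathbb{C}$, and $W$ is finite dimensional.

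First I would set up the ``evaluation on the graph of $f$'' map. For $Q\in W$, substitute $f$ and expand
\[
Q\bigl(z_1,\ldots,z_m,f(z_1,\ldots,z_m)\bigr)=\sum_{\theta\in\N_0^m}c_\theta(Q)\,z^\theta
\]
as a formal power series. Since $f$ has polyradius of convergence $\rho$, this series converges on $\Delta(0;\rho)$, and $Q(z_1,\ldots,z_m,f(z_1,\ldots,z_m))=0$ for all $(z_1,\ldots,z_m)\in\Delta(0;\rho)$ if and only if $c_\theta(Q)=0$ for every $\theta\in\N_0^m$. Each $c_\theta\colon W\to\mathbb{C}$ is a $\mathbb{C}$-linear functional; the crucial observation is that for $Q\in W_{\mathbb{K}}$ the value $c_\theta(Q)$ is a $\mathbb{K}$-linear combination of finite products of the coefficients $f_{(k_1,\ldots,k_m)}\in\mathbb{K}$ of $f$, hence lies in $\mathbb{K}$. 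In other words, written in the monomial basis, the (a priori infinite) linear system $\{c_\theta=0\}_{\theta\in\N_0^m}$ has all of its coefficients in $\mathbb{K}$.

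Next I would pass to a finite system. The solution set $V:=\{Q\in W:c_\theta(Q)=0\ \text{for all }\theta\}$ is a linear subspace of the finite-dimensional space $W$, so it is already cut out by finitely many of the $c_\theta$; thus there is a finite matrix $M$ with entries in $\mathbb{K}$ whose kernel in $W$ is exactly $V$. Now invoke the standard fact that the rank of a matrix with entries in a subfield $\mathbb{K}\subseteq\mathbb{C}$ is the same whether computed over $\mathbb{K}$ or over $\mathbb{C}$ (it equals the largest size of a nonvanishing minor, a computation internal to $\mathbb{K}$); equivalently, Gaussian elimination performed over $\mathbb{K}$ yields a $\mathbb{K}$-basis of the full $\mathbb{C}$-solution space. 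Consequently $\dim_{\mathbb{K}}(V\cap W_{\mathbb{K}})=\dim_{\mathbb{C}}V$. Since $P\in V$ and $P\neq 0$, we get $\dim_{\mathbb{C}}V\ge 1$, hence $V\cap W_{\mathbb{K}}\neq\{0\}$; any nonzero $\tilde P\in V\cap W_{\mathbb{K}}$ is then a nonzero polynomial in $\mathbb{K}[X_1,\ldots,X_m,Y]$ of degree at most $n$ with $\tilde P(z_1,\ldots,z_m,f(z_1,\ldots,z_m))=0$ on $\Delta(0;\rho)$, as desired.

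I do not expect a genuine obstacle here: the whole argument is the remark that the vanishing condition is an honest linear system whose coefficients already belong to $\mathbb{K}$, combined with the field-independence of matrix rank. The only points requiring a little care are verifying that $c_\theta(Q)\in\mathbb{K}$ for $Q\in W_{\mathbb{K}}$ (immediate from the definition of polynomial substitution into a power series together with $\mathbb{K}$ being a subring of $\mathbb{C}$), and the reduction of the infinite system to a finite one (immediate from $\dim_{\mathbb{C}}W<\infty$).
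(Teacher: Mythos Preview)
Your proof is correct and follows essentially the same approach as the paper: both arguments expand $P(z,f(z))$ as a power series, observe that vanishing is an (infinite) homogeneous linear system in the coefficients of $P$ whose equations have entries in $\mathbb{K}$, reduce to finitely many equations by finite-dimensionality, and then invoke the field-independence of rank to descend from a nontrivial $\mathbb{C}$-solution to a nontrivial $\mathbb{K}$-solution. Your write-up is slightly more conceptual (phrasing things via $W_{\mathbb{K}}\otimes_{\mathbb{K}}\mathbb{C}$ and $\dim_{\mathbb{K}}(V\cap W_{\mathbb{K}})=\dim_{\mathbb{C}}V$), but the underlying argument is the same.
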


\begin{proof}
By hypothesis, $f(z)$ is given by
\[
f(z) = \sum_{\theta_k \in \N_0^m} a_{\theta_k} z^{\theta_k} \quad \text{for all } z \in \Delta(0;\rho),
\]
where $a_{\theta_k} \in \mathbb{K}$ and $z^{\theta_k} = z_1^{t_{1,k}} \cdots z_m^{t_{m,k}}$ for all $\theta_k = (t_{1,k}, \ldots, t_{m,k}) \in \N_0^m$. Moreover, we write $P \in \C[X_1,\ldots,X_m,Y]$ as
\[
P(X) = \sum_{|\sigma_\ell|\leq n} C_{\sigma_\ell} X_1^{s_{1,\ell}} \cdots X_m^{s_{m,\ell}} Y^{s_{m+1,\ell}},
\]
where the summation is taken over all $\sigma_\ell = (s_{1,\ell}, \ldots, s_{m+1,\ell}) \in \N_0^{m+1}$ such that $|\sigma_{\ell}| = s_{1,\ell} + \cdots + s_{m+1,\ell} \leq n$. Then, by hypothesis,
\begin{align*}
0 &= P(z_1,\ldots,z_m,f(z_1,\ldots,z_m)) \\
  &= \sum_{|\sigma_\ell|\leq n} C_{\sigma_\ell} z_1^{s_{1,\ell}} \cdots z_m^{s_{m,\ell}} \left( \sum_{\theta_k \in \N_0^m} a_{\theta_k} z^{\theta_k} \right)^{s_{m+1,\ell}} \\
  &= \sum_{|\sigma_\ell|\leq n} C_{\sigma_\ell} z_1^{s_{1,\ell}} \cdots z_m^{s_{m,\ell}} \sum_{\nu_{k,\ell} \in \N_0^m} b_{\nu_{k,\ell}} z^{\nu_{k,\ell}}
\end{align*}
for all $z = (z_1,\ldots,z_m) \in \Delta(0;\rho)$, where each $b_{\nu_{k,\ell}} \in \mathbb{K}$ arises as a finite sum of products of powers of the coefficients $a_{\theta_k}$.

Reordering the summation and regrouping terms, we obtain
\[
\sum_{\theta_k \in \N_0^m} A_{\theta_k} z^{\theta_k} = 0 \quad \text{for all } z \in \Delta(0; \rho),
\]
with
\begin{equation}\label{coeff}
A_{\theta_k} = \sum b_{\nu_{i,j}} C_{\sigma_j},
\end{equation}
where the sum runs over all $\sigma_j = (s_{1,j}, \ldots, s_{m+1,j}) \in \N_0^{m+1}$ such that $\nu_{i,j} + (s_{1,j},\ldots,s_{m,j}) = \theta_k$.

Since a power series is identically zero if and only if each of its coefficients equals zero, it follows that \( A_{\theta_k} = 0 \) for all \( \theta_k \in \mathbb{N}_0^m \).
 Let
\[
\mathcal{A}_{\theta_k}(x_1, \ldots, x_{n_P}) = 0
\]
denote the $k$-th linear equation with coefficients in $\mathbb{K}$ obtained by replacing each $C_{\sigma_j}$ with an indeterminate $x_j$ in \eqref{coeff}, where $n_P$ is the number of distinct monomials in $P$.

Among these infinitely many equations, at most $n_P$ are linearly independent. The system admits a nontrivial solution over $\mathbb{C}$, namely the original coefficients $x_j = C_{\sigma_j}$. Since the coefficients of each equation lie in $\mathbb{K}$, the system also admits a nontrivial solution $(\tilde{C}_{\sigma_j})$ with each $\tilde{C}_{\sigma_j} \in \mathbb{K}$. Hence,
\[
0 = \sum_{\sigma_\ell} \tilde{C}_{\sigma_\ell} z_1^{s_{1,\ell}} \cdots z_m^{s_{m,\ell}} \left( \sum_{\theta_k \in \N_0^m} a_{\theta_k} z^{\theta_k} \right)^{s_{m+1,\ell}}
\]
for all $z \in \Delta(0; \rho)$.

Defining
\[
\tilde{P}(X_1,\ldots,X_m,Y) := \sum_{\sigma_\ell} \tilde{C}_{\sigma_\ell} X_1^{s_{1,\ell}} \cdots X_m^{s_{m,\ell}} Y^{s_{m+1,\ell}} \in \mathbb{K}[X_1,\ldots,X_m,Y],
\]
we conclude that
\[
\tilde{P}(z_1,\ldots,z_m, f(z_1,\ldots,z_m)) = 0 \quad \text{for all } z \in \Delta(0; \rho),
\]
as desired.
\end{proof}

As an immediate consequence of the previous lemma, we obtain the following corollary.

\begin{corollary}\label{c1}
Let $m$ be a positive integer, and let $\rho$ be an $m$-tuple in $\R_{>0}^m$. Then the set of power series in $m$ variables with coefficients in a countable subfield of $\mathbb{C}$, having polyradius of convergence $\rho$ and algebraic over $\mathbb{C}(z_1,\ldots,z_m)$, is countable.
\end{corollary}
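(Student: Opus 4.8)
The plan is to use the previous lemma to replace, for each relevant $f$, a polynomial relation with coefficients in $\mathbb{C}$ by one with coefficients in the countable field $\mathbb{K}$, and then to count. Fix a countable subfield $\mathbb{K}\subseteq\mathbb{C}$ and let $\mathcal{S}$ be the set of power series $f\in\mathbb{K}_{\rho}[[z_1,\ldots,z_m]]$ that are algebraic over $\mathbb{C}(z_1,\ldots,z_m)$; the goal is to show that $\mathcal{S}$ is countable. For $f\in\mathcal{S}$ there is, by definition, a nonzero $P\in\mathbb{C}[X_1,\ldots,X_m,Y]$ with $P(z_1,\ldots,z_m,f(z_1,\ldots,z_m))=0$ on $\Delta(0;\rho)$, and the previous lemma upgrades this to a nonzero $\tilde P\in\mathbb{K}[X_1,\ldots,X_m,Y]$ with the same vanishing property. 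Choosing one such $\tilde P$ for every $f$ defines a map $\Phi\colon\mathcal{S}\to\mathbb{K}[X_1,\ldots,X_m,Y]\setminus\{0\}$. The target is countable (the polynomial ring is a countable union, over the total degree, of finite-dimensional $\mathbb{K}$-vector spaces, each of which is countable), so it is enough to prove that each fiber of $\Phi$ is finite.

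The heart of the argument is therefore the following finiteness claim: for a fixed nonzero $Q\in\mathbb{K}[X_1,\ldots,X_m,Y]$, only finitely many $f\in\mathcal{S}$ satisfy $Q(z_1,\ldots,z_m,f(z_1,\ldots,z_m))=0$ on $\Delta(0;\rho)$. I would prove this by working in the field of fractions $L$ of the integral domain $R=\mathbb{C}[[z_1,\ldots,z_m]]$. Let $d=\deg_Y Q$. If $d=0$ then $Q$ would be a nonzero polynomial in $z_1,\ldots,z_m$ vanishing on an open polydisc, which is impossible; so $d\geq 1$, the leading $Y$-coefficient of $Q$ is a nonzero element of $\mathbb{C}[z_1,\ldots,z_m]\subseteq L$, hence $Q(z_1,\ldots,z_m,Y)$ has degree exactly $d$ in $L[Y]$ and thus at most $d$ roots in $L$. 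Any solution $f$ is in particular a formal power series, so $f\in R\subseteq L$; and since $Q(z_1,\ldots,z_m,f(z_1,\ldots,z_m))$ is a convergent power series vanishing on $\Delta(0;\rho)$, it is the zero power series, i.e.\ $f$ is genuinely a root of $Q$ in $L[Y]$. Hence there are at most $d$ such $f$, the fibers of $\Phi$ are finite, $\mathcal{S}$ is a countable union of finite sets, and the corollary follows.

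The only genuinely non-routine point is this finiteness claim, and within it the passage from ``$Q(z,f(z))$ vanishes on the polydisc'' to ``$f$ is a root of $Q\in L[Y]$'': this uses the identity theorem for several-variable power series (a convergent power series vanishing on a nonempty polydisc has all of its coefficients equal to zero) together with the fact that $R$ is an integral domain, so that the classical bound of $d$ roots for a degree-$d$ polynomial over a field applies in $L[Y]$. The remaining ingredients — countability of $\mathbb{K}[X_1,\ldots,X_m,Y]$ and the fact that a countable union of finite sets is countable — are entirely standard.
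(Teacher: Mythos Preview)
Your argument is correct and is precisely the natural fleshing-out of the paper's one-line remark that the corollary is an ``immediate consequence of the previous lemma'': reduce to a polynomial relation over $\mathbb{K}$ via the lemma, note that $\mathbb{K}[X_1,\ldots,X_m,Y]$ is countable, and bound the fibers by the degree-in-$Y$ root count in the fraction field of $\mathbb{C}[[z_1,\ldots,z_m]]$. The paper supplies no further details, so your proof and the intended one coincide.
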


\section{Proof of the Main Theorem}

Let $m$ be a positive integer, and let $\rho = (\rho_1,\ldots,\rho_m) \in (0,1]^m$ be a fixed polyradius. Let $S \subseteq \A_{\rho}^m$ be a subset of algebraic numbers in the polydisc $\Delta(0;\rho)$ that is closed under complex conjugation and contains the zero vector. We consider the following enumeration of $\A_{\rho}^m$:
\begin{equation}\label{enum3}
\A_{\rho}^m = \{\alpha_0, \alpha_1, \ldots\} \cup \{\overline{\alpha}_0, \overline{\alpha}_1, \ldots\},
\end{equation}
where $\alpha_0 = (0,\ldots,0)$. We will construct uncountably many power series $f \in \Z_{\rho}[[z_1,\ldots,z_m]]$ of the form
\begin{equation}\label{mainf}
f(z) = g(z) + \sum_{k=1}^{\infty} z^{\theta_k} h_k(z) \prod_{j=0}^{k-1} f_j(z),
\end{equation}
where $\theta_k = (t_{1,k},\ldots,t_{m,k}) \in \N_0^m$ for all $k \geq 1$, and $z^{\theta_k} = z_1^{t_{1,k}} \cdots z_m^{t_{m,k}}$. Here, $g(z_1,\ldots,z_m)$ is the power series in $\Z_{\rho}[[z_1,\ldots,z_m]]$ given by Lemma~\ref{lem3}, and $f_j(z_1,\ldots,z_m)$ is the power series in $\Z\{z_1,\ldots,z_m\}$ such that $f_j(z) = 0$ for $z \in \A^m$ if and only if $z \in \{\alpha_j, \overline{\alpha}_j\}$, and
\[
|f_j(z_1,\ldots,z_m)| \leq \frac{\hat{C}_j}{(1-\|(z_1,\ldots,z_m)\|)^4},
\]
where $\hat{C}_j$ is a positive constant depending only on $\alpha_j$, which exists for all $j \geq 0$ by Lemma~\ref{lem2}. The functions $h_k(z)$, as well as the $m$-tuples $\theta_k$, will be defined inductively according to the enumeration~\eqref{enum3}.

To define $\theta_j \in \N_0^m$ and $h_j(z) \in \Z\{z_1,\ldots,z_m\}$, we consider the pair $\{\alpha_j, \overline{\alpha}_j\}$ and aim to ensure that
\[
\{f(\alpha_j), f(\overline{\alpha}_j)\} \subseteq \K_{\alpha_j},
\]
where
\[
\K_{\alpha_j} =
\begin{cases}
\QQ, & \text{if } \alpha_j \in S, \\
\C \setminus \QQ, & \text{if } \alpha_j \notin S,
\end{cases}
\]
for all $j \geq 0$.

We begin by setting $\theta_1 = (0,\ldots,0) \in \N_0^m$, so that $\alpha_1^{\theta_1} = 1$. Since $f_0(\alpha_1) \neq 0$, it is easy to see that there exists $\beta_1 \in \C$ such that
\[
\{g(\alpha_1) + \beta_1 f_0(\alpha_1),\ g(\overline{\alpha}_1) + \overline{\beta}_1 f_0(\overline{\alpha}_1)\} \subseteq \K_{\alpha_1},
\]
where we use the fact that $\QQ$ is closed under complex conjugation. Moreover, when $\alpha_1 \in \mathbb{R}^m$, we may choose $\beta_1 \in \mathbb{R}$. By Lemma~\ref{lem1}, there exists a function $h_1(z) \in \Z\{z_1,\ldots,z_m\}$ with bounded coefficients such that $h_1(\alpha_1) = \beta_1$ and $h_1(\overline{\alpha}_1) = \overline{\beta}_1$. Furthermore, there exists a positive constant $\tilde{C}_1$ such that
\[
|h_1(z_1,\ldots,z_m)| \leq \frac{\tilde{C}_1}{1-\|(z_1,\ldots,z_m)\|}.
\]
By~\eqref{mainf}, we have $f(\alpha_1) = g(\alpha_1) + h_1(\alpha_1) f_0(\alpha_1) \in \K_{\alpha_1}$, and similarly $f(\overline{\alpha}_1) \in \K_{\overline{\alpha}_1}$, independently of the choice of $\theta_j$ and $h_j(z)$ for $j \geq 2$. Thus, $\theta_1$ and $h_1(z)$ are well defined.

Proceeding inductively, since $f_j(\alpha_k) \neq 0$ for all $0 \leq j \leq k-1$ and $f_k(\alpha_k) = 0$, we can choose $\theta_k \in \N_0^m$ and $h_k(z) \in \Z\{z_1,\ldots,z_m\}$ such that $\alpha_k^{\theta_k} \neq 0$ and
\[
f(\alpha_k) = g(\alpha_k) + \sum_{j=1}^k \alpha_k^{\theta_j} h_j(\alpha_k) \prod_{i=0}^{j-1} f_i(\alpha_k) \in \K_{\alpha_k}
\]
for all $k \geq 1$.

Moreover, the sequence $\{\theta_k\}_{k \geq 1} \subseteq \N_0^m$ can be chosen such that
\begin{equation}\label{final}
\sum_{k=1}^{\infty} z^{\theta_k} h_k(z) \prod_{j=0}^{k-1} f_j(z)
\end{equation}
belongs to $\Z\{z_1,\ldots,z_m\}$. Indeed, we estimate:
\[
\left|z^{\theta_k} h_k(z) \prod_{j=0}^{k-1} f_j(z)\right| < \frac{C_k \|z\|^{|\theta_k|}}{(1-\|z\|)^{4k+1}},
\]
where $|\theta_k| := t_{1,k} + \cdots + t_{m,k}$ and $C_k := \hat{C}_1 \cdots \hat{C}_{k-1} \tilde{C}_k$. In particular, if we choose $\theta_k$ such that $|\theta_k| \geq \lceil C_k \rceil + k(4k+1)$, then
\begin{align*}
\left|\sum_{k=1}^{\infty} z^{\theta_k} h_k(z) \prod_{j=0}^{k-1} f_j(z)\right| &\leq \sum_{k=1}^{\infty} C_k \|z\|^{\lceil C_k \rceil} \left( \frac{\|z\|^k}{1-\|z\|} \right)^{4k+1} \\
&\leq \frac{1}{e|\log\|z\||} \sum_{k=1}^{\infty} \left( \frac{\|z\|^k}{1-\|z\|} \right)^{4k+1},
\end{align*}
using the fact that for $0 < \|z\| < 1$, the function $x \mapsto x \|z\|^x$ attains its maximum at $x = |1/\log\|z\||$.

Moreover, we have $\|z\|^k/(1-\|z\|) < 1/2$ for all 
\[
k \geq K(z) := \frac{\log(1-\|z\|) - \log 2}{\log\|z\|}.
\]
Hence,
\begin{align*}
\sum_{k=1}^{\infty} \left( \frac{\|z\|^k}{1-\|z\|} \right)^{4k+1}
&= \sum_{k=1}^{\lfloor K(z) \rfloor} \left( \frac{\|z\|^k}{1-\|z\|} \right)^{4k+1} + \sum_{k > \lfloor K(z) \rfloor} \left( \frac{\|z\|^k}{1-\|z\|} \right)^{4k+1} \\
&\leq \sum_{k=1}^{\lfloor K(z) \rfloor} \left( \frac{\|z\|^k}{1-\|z\|} \right)^{4k+1} + \frac{1}{2} \\
&\leq \frac{K(z)}{(1-\|z\|)^{4K(z)+1}} + \frac{1}{2}.
\end{align*}

Therefore,
\[
\sum_{k=1}^{\infty} \left| z^{\theta_k} h_k(z) \prod_{j=0}^{k-1} f_j(z) \right| \leq \frac{1}{e|\log\|z\||} \left( \frac{K(z)}{(1-\|z\|)^{4K(z)+1}} + \frac{1}{2} \right).
\]
This shows that the series in \eqref{final} belongs to $\Z\{z_1,\ldots,z_m\}$ whenever $|\theta_k| \geq \lceil C_k \rceil + k(4k+1)$. Since the only further requirement on $\theta_k$ is that $\alpha_k^{\theta_k} \neq 0$, we can construct uncountably many such power series as in \eqref{mainf} with polyradius $\rho$ and satisfying $\{f(\alpha_k),f(\overline{\alpha}_k)\} \subseteq \K_{\alpha_k}$ for all $k \geq 0$.

Therefore, by Corollary~\ref{c1}, there exist uncountably many transcendental functions $f \in \Z_{\rho}[[z_1, \ldots, z_m]]$ such that $S_f = S$, which completes the proof. \qed

\begin{remark}
The proof presented in this work not only extends the result proved in \cite{gugu20} to several variables, but also simplifies the argument. In particular, the construction introduced here does not require the use of Liouville-type inequalities between algebraic numbers to establish the transcendence of elements outside the set $S$.
\end{remark}

We conclude this work by proposing a matrix-valued version of Mahler’s Problem {\sc C}. Given a subset of matrices with algebraic entries that is closed under complex conjugation and contains the zero matrix, does there exist an analytic matrix-valued function whose exceptional set coincides with the given subset? In other words, is there an analytic matrix function that maps exactly those matrices with algebraic coefficients in the domain to matrices with algebraic coefficients in the codomain?

\end{document}